\documentclass[11pt]{amsart}
\usepackage{fullpage}
\usepackage[active]{srcltx}
\usepackage{amssymb,amsthm,amsmath,amscd}

\usepackage{hyperref}

\usepackage{graphicx} 
\usepackage{amsfonts}

\usepackage{tikz-cd}
\usepackage[all,cmtip]{xy}

\usepackage{amsmath}
\usepackage{amssymb}
\usepackage{amsthm}
\newtheorem{theorem}{Theorem}
\newtheorem{lemma}[theorem]{Lemma}

\newtheorem*{theoremM}{Main Theorem}

\theoremstyle{definition}
\newtheorem{remark}[theorem]{Remark}

\DeclareMathOperator{\m}{\mathfrak{m}}

\DeclareMathOperator{\depth}{\text{depth}}
\DeclareMathOperator{\cd}{\text{cd}}
\DeclareMathOperator{\pd}{\text{pd}}
\DeclareMathOperator{\Ext}{\text{Ext}}
\DeclareMathOperator{\Hom}{\text{Hom}}
\DeclareMathOperator{\Image}{\text{Image}}

\newcommand{\cdim}{\operatorname{cd}}
\newcommand{\height}{\operatorname{ht}}

\title{Local cohomological dimension and depth in mixed characteristic}
\author{Linquan Ma}
\address{Department of Mathematics, Purdue University, West Lafayette, IN 47907, USA}
\email{ma326@purdue.edu}

\begin{document}

\begin{abstract}
Let $(R,\mathfrak m)$ be an unramified regular local ring of mixed characteristic $(0,p)$ and dimension $d$ and let $I\subseteq R$ be an ideal. We prove that $\depth(R/I)\geq 3$ implies $\cd(I)\leq d-3$, and if $R$ is essentially of finite type over a DVR, then $\depth(R/I)\geq 4$ implies $\cd(I)\leq d-4$. More generally, $H_I^j(R)$ is a $\mathbb Q$-vector space whenever $j>d-\depth(R/I)$, thus vanishing of local cohomology in this range is determined completely by the characteristic zero fiber.
\end{abstract}

\maketitle

\section{Introduction}

Local cohomology was introduced by Grothendieck in the early 1960s as a tool for studying the local structure of schemes and the cohomology of
their complements.  Let $R$ be a Noetherian  ring and
let $I\subseteq R$ be an ideal.  The local cohomological dimension of $I$ is defined as
\[
    \cdim(I)=\sup\{j\mid H_I^j(R)\neq 0\}.
\]
In other words, $\cdim(I)$ records the last possible
nonvanishing degree of local cohomology supported on $V(I)$. 


A problem in commutative algebra and algebraic geometry, originally posed by Grothendieck in \cite{HartshorneLocalCohomology}, is to find algebraic or geometric conditions ensuring the vanishing of $H_I^j(R)$. In this direction, Grothendieck proved the first general bound:
\[
  \height(I)\leq \cdim(I)\leq \dim(R).
\]

The next improvement is the Hartshorne--Lichtenbaum vanishing theorem
\cite{HartshorneCohomologicalDimension}. Namely, if $R$ is a Noetherian complete local domain, then
\[
    \cd(I)\leq \dim(R)-1
    \,\ \Longleftrightarrow \,\
    \dim(R/I)\geq 1.
\]
For an arbitrary Noetherian local ring one can obtain a statement by considering all minimal primes of its completion, see \cite[{Tag 0EB6}]{stacks-project}. In sum, the Hartshorne--Lichtenbaum vanishing essentially characterizes $\cd(I)\leq \dim(R)-1$.

Beyond this point, the study of local cohomological dimension (and the vanishing of local cohomology) has focused primarily on the case where $R$ is nonsingular. Suppose $(R,\m)$ is a complete
regular local ring with separably closed residue field. Then the second vanishing theorem says that
\[
 \cdim(I)\leq \dim(R)-2
 \,\ \Longleftrightarrow \,\
 \dim(R/I)\geq 2 \text{ and } 
 \operatorname{Spec}(R/I)\setminus\{\mathfrak m\} \text{ is connected}.
\]
For an arbitrary regular local ring $R$, this criterion can be applied after
passing to the completion of the strict Henselization of $R$ and thus leads to a characterization of $\cd(I)\leq \dim(R)-2$ for nonsingular $R$. Hartshorne proved the theorem in the graded case and called it ``second
vanishing theorem'' in \cite{HartshorneCohomologicalDimension}. The equal characteristic zero
case was proved by Ogus \cite{OgusLocalCohomologicalDimension} and the equal characteristic $p>0$ case was proved
by Peskine--Szpiro \cite{PeskineSzpiroIHES}. Huneke--Lyubeznik later gave a uniform treatment in equal characteristic
\cite{HunekeLyubeznikVanishingLocalCohomology}. The unramified mixed characteristic case was established by Zhang \cite{ZhangSecondVanishing}, and the remaining ramified mixed characteristic case was recently settled by 
Scheffelin \cite{ScheffelinSecondVanishingTheorem}.

These results suggest a natural hierarchy of $\cd(I)$ through the depth of the quotient ring $R/I$. More specifically, let $R$ be a regular local ring of dimension $d$. If $\depth(R/I)\geq 1$ then $\dim(R/I)\geq 1$ and thus
Hartshorne--Lichtenbaum vanishing implies $\cdim(I)\leq d-1$.  If
$\depth(R/I)\geq 2$, then Hartshorne's connectedness theorem
\cite[Theorem 2.2]{HartshorneCompleteIntersectionConnectedness} together with the second vanishing theorem implies
$\cdim(I)\leq d-2$. For regular local rings of characteristic $p>0$, this pattern continues: Peskine--Szpiro \cite[Ch.~III, \S4]{PeskineSzpiroIHES} proved the following inequality relating local cohomological dimension and depth in this case:
\[
    \cdim(I)\leq d -\depth(R/I).
\]

On the other hand, this inequality does not hold in equal characteristic zero. In fact, it already fails for generic determinantal ideals \cite[Example~2.6]{VarbaroCohomologicalDimension}: if $R$ is a power series ring over $\mathbb{Q}$ with a $2\times 3$ matrix of variables and $I\subseteq R$ is the ideal generated by the $2\times 2$ minors, then $\depth(R/I)=4$ while $\cd(I)=3 > 6-4 =d-\depth(R/I)$. Nevertheless, the
depth-three case does hold at least under mild finiteness hypotheses: it was proved by Dao--Takagi \cite{DaoTakagiDepthCohomologicalDimension} (extending earlier work of Varbaro \cite{VarbaroCohomologicalDimension} in the graded case) that if $R$ is a regular local ring essentially of finite type over a field of characteristic zero and $I\subseteq R$, then $\depth(R/I)\geq 3$ implies $\cd(I)\leq d-3$.

In this short note, we establish such
depth-three vanishing in the unramified mixed characteristic case, and in fact, we can also prove a depth-four vanishing under mild finiteness hypotheses (so the scenario in mixed characteristic is better than that in characteristic zero).

\begin{theoremM}
Let $R$ be an unramified regular local ring of mixed characteristic and of dimension $d$, and let $I\subseteq R$ be an ideal. 
\begin{enumerate}
    \item If $\depth(R/I)\geq 3$, then $\cd(I)\leq d-3$.
    \item If $\depth(R/I)\geq 4$ and $R$ is essentially of finite type over a DVR, then $\cd(I) \leq d-4$.
\end{enumerate}
\end{theoremM}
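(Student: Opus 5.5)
The plan is to split the problem into its $p$-torsion part and its characteristic zero part, following the abstract. The first step is the general claim that $H^j_I(R)$ is a $\mathbb Q$-vector space for $j>d-\depth(R/I)$. Multiplication by $p$ gives the short exact sequence $0\to R\xrightarrow{p} R\to R/pR\to 0$, and hence a long exact sequence
\[
\cdots\to H^{j-1}_I(R/pR)\to H^j_I(R)\xrightarrow{p} H^j_I(R)\to H^j_I(R/pR)\to\cdots.
\]
Since $R$ is unramified, $\bar R=R/pR$ is a regular local ring of characteristic $p$ and dimension $d-1$. The Peskine--Szpiro inequality therefore gives $H^{j}_{I\bar R}(\bar R)=0$ for $j>d-1-\depth(\bar R/I\bar R)$.

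The issue is that $\depth(R/(I+pR))$ is not controlled by $\depth(R/I)$, because $p$ need not be a nonzerodivisor on $R/I$. To get around this, I would use a Frobenius/Peskine--Szpiro style argument on $\bar R$ directly. The point is that $H^j_{I}(R/pR)$ only depends on the ideal up to radical, and $I\bar R$ is typically not Cohen--Macaulay-like. So I would instead use the sequence $0\to R/I\xrightarrow{p}R/I$ in the form $\Ext$ or Tor. Concretely, Peskine--Szpiro's proof works through the vanishing of $\Ext^{j}_{\bar R}(\bar R/ (I\bar R)^{[q]},\bar R)$ for large $j$. Flatness of Frobenius on $\bar R$ gives $\Ext^j_{\bar R}(\bar R/J^{[q]},\bar R)=F^e(\Ext^j(\bar R/J,\bar R))$. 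I would apply this to $J=I\bar R$, using that $\pd_{\bar R}(R/I\otimes^L_R \bar R)\le d-1-\depth(R/I)+1$ by Auslander--Buchsbaum, since $\mathrm{Tor}_1^R(R/I,\bar R)$ is the $p$-torsion of $R/I$.

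The aim is to show that the derived complex $R/I\otimes^L\bar R$ has a lift of Frobenius behaviour, giving $H^j_I(\bar R)=0$ for $j>d-\depth(R/I)$ and $H^{j-1}_I(\bar R)$ killed appropriately. Then multiplication by $p$ on $H^j_I(R)$ is injective and surjective in that range, so $H^j_I(R)$ is a $\mathbb Q$-vector space, equal to $H^j_{I R[1/p]}(R[1/p])$. I expect this step to be the \textbf{main obstacle}. The plan is to work with $\operatorname{Ext}^j_R(R/I^{t},R)$ modulo $p$ and use the Lyubeznik/Bhatt-style comparison $H^j_I(\bar R)\cong \varinjlim F^e\Ext^j(\ldots)$ for the perfect complex $K=R/I\otimes^L\bar R$ on $\bar R$, which has amplitude controlled by $\pd_R(R/I)=d-\depth(R/I)$.

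With the $\mathbb Q$-vector space statement in hand, part (1) reduces to characteristic zero. We have $H^j_I(R)=H^j_{I}(R[1/p])$, and $R[1/p]$ is a regular ring containing $\mathbb Q$ whose localizations at primes $P$ have $\depth(R_P/I_P)\ge \depth(R/I)-\dim R/P$. For $j=d-2$ and $d-1$ (and $d$), vanishing is needed. Since $H^j_I(R)\otimes$ is supported at primes $P$ not containing $p$, each of dimension at most $d-1$, we may pass to $R_P$ with $\dim R_P\le d-1$ and $\depth(R_P/I_P)\ge 3-(d-\dim R_P)$. Then the Hartshorne--Lichtenbaum and second vanishing theorems in equal characteristic zero, via Ogus and Hartshorne connectedness, kill $H^{j}$ for $j\ge \dim R_P-1$ provided $\depth\ge 2$. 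This holds because $\dim R_P=d-1$ forces $\depth\ge 2$. For smaller $\dim R_P$, the required degree $j\ge d-2$ exceeds $\dim R_P$, which handles it.

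For part (2), the same reduction needs $H^{d-3}_I(R)[1/p]=0$ at primes of height $d-1$ with $\depth(R_P/I_P)\ge 3$. This is exactly Dao--Takagi's depth-three theorem in characteristic zero, which applies because $R_P$ is essentially of finite type over the fraction field of the DVR. Height $d-2$ primes need second vanishing with depth $\ge 2$, and lower heights are trivial.
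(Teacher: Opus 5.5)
Your architecture matches the paper's: the long exact sequence for multiplication by $p$, the claim that $H^j_I(R)$ is a $\mathbb{Q}$-vector space for $j>d-g$ (with $g=\depth(R/I)$), and then Hartshorne--Lichtenbaum, connectedness plus second vanishing, and Dao--Takagi at primes $P\not\ni p$, using $\height P\le d-1$ and $\depth(R_P/IR_P)\ge g-(d-\height P)$. That second half is fine apart from a small slip. When $\height P=d-2$ in (1), resp.\ $d-3$ in (2), the degree $j$ equals $\dim R_P$ rather than exceeding it, so you need Hartshorne--Lichtenbaum there; it applies because the depth is at least $1$.

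The gap is the characteristic-$p$ step, which you flag as the main obstacle but do not prove, and the mechanisms you suggest do not work as stated. $\Ext^j_R(R/I^t,R)$ is not controlled by $\pd_R(R/I)$. The pullbacks $F^{e*}K$ of $K=R/I\otimes^{\mathbb L}_R\bar R$ give a direct system computing $H^j_I(\bar R)$ only if you have a map $F^*K\to K$ compatible with $\bar R/(I\bar R)^{[p]}\to\bar R/I\bar R$. Since $K$ is not a resolution of $\bar R/I\bar R$ (it carries the $p$-torsion of $R/I$ in degree $-1$), no such map is evident. What works is to avoid direct systems on $K$ altogether:
the natural map $\Ext^j_{\bar R}(\bar R/I\bar R,\bar R)\to H^j_I(\bar R)$ factors through $\Ext^j_R(R/I,\bar R)=H^j(R\Hom_{\bar R}(K,\bar R))$. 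This group vanishes for $j>d-g$, because the minimal resolution of $R/I$ has length $d-g$. By Lyubeznik, the image of $\Ext^j_{\bar R}(\bar R/I\bar R,\bar R)$ generates $H^j_I(\bar R)$ as an $F$-module, so $H^j_I(\bar R)=0$ for $j>d-g$. Your Frobenius-lift intuition can also be made to work here. Take $R=W(k)[[x_2,\dots,x_d]]$ with its flat Frobenius lift $\phi$, and apply the same factorization to $\phi^e(I)R$, which has projective dimension $d-g$ and reduces to $(I\bar R)^{[p^e]}$.

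The more serious omission is hidden in ``$H^{j-1}_I(\bar R)$ killed appropriately''. In degree $j=d-g+1$ the module $H^{d-g}_I(\bar R)$ is in general nonzero. By the long exact sequence, $p$ is injective on $H^{d-g+1}_I(R)$ if and only if $\varphi\colon H^{d-g}_I(R)\to H^{d-g}_I(\bar R)$ is surjective. Without this, $H^{d-g+1}_I(R)$ could be a nonzero $p$-divisible $p$-power-torsion module, which is invisible after inverting $p$. This borderline degree is exactly the one the theorem needs ($j=d-2$ when $g=3$, $j=d-3$ when $g=4$), and there the characteristic-zero input only gives $H^j_I(R)[1/p]=0$. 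The Frobenius lift does not help, since $\phi^e(I)R$ does not have the radical of $I$ in $R$.

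The paper proves surjectivity in three steps:
\begin{enumerate}
\item Because $d-g$ is the top degree, $\Ext^{d-g}_R(R/I,R)\to\Ext^{d-g}_R(R/I,\bar R)$ is onto. Hence $\Image(\varphi)$ contains the image of $\Ext^{d-g}_{\bar R}(\bar R/I\bar R,\bar R)$.
\item That image generates $H^{d-g}_I(\bar R)$ as an $F$-module, hence as a $D(\bar R)$-module by \`Alvarez Montaner--Blickle--Lyubeznik (after reducing to the complete $F$-finite case).
\item $\Image(\varphi)$ is itself a $D(\bar R)$-submodule, by work of Bhatt--Blickle--Lyubeznik--Singh--Zhang.
\end{enumerate}
Your proposal is missing some input of this kind, showing that the image of $H^{d-g}_I(R)$ is stable under differential operators on $\bar R$.
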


This follows from our Theorem~\ref{thm: depth and cohomological dimension}, which says a bit more: namely that beyond $\depth(R/I)$, the obstruction to the vanishing of $H_I^j(R)$ lies entirely in characteristic zero. The Main Theorem then follows from this statement together with the existing vanishing theorems mentioned above. It is easy to see that our result is sharp in the sense that this pattern stops at $\depth(R/I)=5$ by the same generic determinantal example (over $\mathbb{Z}_p$), see Remark~\ref{rmk: sharpness}.

\subsection*{Acknowledgment} 
The author was supported by NSF grant DMS-2302430. I would like to thank Bhargav Bhatt, Anurag Singh,
and Uli Walther for many enjoyable discussions on local cohomology. I also thank Manav Batavia and Hanlin Cai for some discussions related to the results of the paper.

\subsection*{AI disclosure} The main idea of the proof was inspired by a conversation between the author and OpenAI's ChatGPT 5.6 Pro. Specifically, the author originally tried to construct counterexamples to the depth-three vanishing in mixed characteristic. ChatGPT then pointed out that a candidate example the author proposed indeed satisfies the vanishing and gave an essentially valid argument by analyzing the maps from Ext to local cohomology and utilizing $D$-modules. The author then realized that the argument could be extended and combined with techniques in \cite{BBLSZ1} to prove the general case. The final writing was done by the author (ChatGPT was used to correct grammer and other minor mistakes).

\section{The main result}

Recall that a regular local ring $(R,\m)$ of mixed characteristic $(0,p)$ is unramified if $p\notin \m^2$, or equivalently, $R/p$ is a regular local ring of characteristic $p$. We refer the readers to \cite{LyubeznikFmodules} and \cite{BBLSZ1} for basic facts on $F$-modules and $D$-modules. The following is the key ingredient in the proof of our main result.  

\begin{lemma}
\label{lem: key surjectivity}
Let $(R,\m)$ be an unramified regular local ring of mixed characteristic $(0,p)$ and dimension $d$. Suppose $I\subseteq R$ is an ideal so that $\depth(R/I)=g$. Then we have 
\begin{enumerate}
    \item[(i)] $H_I^{j}(R/p)=0$ for all $j>d-g$.
    \item[(ii)] The natural map $H_I^{d-g}(R) \to H_I^{d-g}(R/p)$ is surjective. 
\end{enumerate}
\end{lemma}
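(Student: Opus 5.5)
The plan is to reduce to the complete case, then compare with an Ext module that Auslander--Buchsbaum controls, and finally use differential operators to pass from Ext to local cohomology. The step I am least sure of is the generation statement in the third paragraph.

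\emph{Reduction and setup.} Local cohomology commutes with the faithfully flat map $R\to\widehat R$, and $\depth$ and unramifiedness are preserved, so we may assume $R$ is complete. By Cohen's structure theorem $R\cong V[[x_2,\dots,x_d]]$ with $V$ a complete unramified DVR, i.e.\ a Cohen ring. Set $S=R/p$, a complete regular local ring of characteristic $p$, and note $H^j_I(R/p)=H^j_{IS}(S)$. By Auslander--Buchsbaum, $\pd_R(R/I)=d-g$, so $\Ext^j_R(R/I,R)=0$ for $j>d-g$. The long exact sequence of $\Ext_R(R/I,-)$ applied to $0\to R\xrightarrow{p}R\to S\to 0$ then gives two facts:
\[
\Ext^j_R(R/I,S)=0 \ \text{ for } j>d-g, \qquad \Ext^{d-g}_R(R/I,R)\twoheadrightarrow \Ext^{d-g}_R(R/I,S).
\]

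\emph{Passing from Ext to local cohomology.} Consider the natural maps $\Ext^j_R(R/I,S)\to H^j_I(S)$. The key claim I would aim to prove is that the image of this map generates $H^j_I(S)$ as a $D_S$-module, or alternatively as an $F$-module via Lyubeznik's theory. If the claim holds, both parts follow quickly.
\begin{itemize}
\item For (i): for $j>d-g$ the generating Ext group vanishes, hence $H^j_I(S)=0$.
\item For (ii): the composite $\Ext^{d-g}_R(R/I,R)\to \Ext^{d-g}_R(R/I,S)\to H^{d-g}_I(S)$ factors through $H^{d-g}_I(R)\to H^{d-g}_I(S)$. So the image $N$ of $H^{d-g}_I(R)$ in $H^{d-g}_I(S)$ contains the generating submodule. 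Now $H^{d-g}_I(R)$ is a module over $D_{R/V}$, the divided-power operators $\partial^{[n]}/\partial x_i^{[n]}$. This ring surjects onto $D_{S/k}$ on reduction mod $p$, and the map $H_I(R)\to H_I(S)$ is compatible with these operators. Hence $N$ is a $D_S$-submodule containing a generating set, so $N=H^{d-g}_I(S)$.
\end{itemize}

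\emph{Main obstacle: the generation claim.} When $p$ is a nonzerodivisor on $R/I$, we have $R/I\otimes^L_R S=S/IS$. Then $\Ext^j_R(R/I,S)=\Ext^j_S(S/IS,S)$, and Lyubeznik's description
\[
H^j_{IS}(S)=\varinjlim_e F^e\bigl(\Ext^j_S(S/IS,S)\bigr)
\]
exhibits $H^j_{IS}(S)$ as $F$-module-generated by the image of this Ext. In that case (i) is also just Peskine--Szpiro, since $\depth S/IS=g-1$.

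In general $R/I\otimes^L_R S$ also has $\operatorname{Tor}_1=(0:_{R/I}p)$, and I would handle it as follows. Let $F_\bullet$ be a minimal free resolution of $R/I$ of length $d-g$, so $\overline F_\bullet:=F_\bullet\otimes S$ is a perfect $S$-complex with $\Ext^j_R(R/I,S)=H^j(\operatorname{Hom}_S(\overline F_\bullet,S))$. Write $F^e$ for the Frobenius pullback on $S$; it is exact because $S$ is regular. Then $F^e(\overline F_\bullet)$ is a complex whose homology is supported on $V(IS)$ and whose dual still vanishes beyond degree $d-g$. I would show that the systems $\{F^e\overline F_\bullet\}$ and $\{S/I^nS\}$ are pro-isomorphic in the derived category, in the style of the arguments in \cite{BBLSZ1}. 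This would give
\[
H^j_I(S)=\varinjlim_e H^j\bigl(\operatorname{Hom}_S(F^e\overline F_\bullet,S)\bigr)=\varinjlim_e F^e\bigl(\Ext^j_R(R/I,S)\bigr),
\]
which is exactly the generation claim. This pro-isomorphism, or equivalently an $F$-module generation argument that tolerates the $p$-torsion of $R/I$, is the step I expect to be hardest.

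A separate technical point arises when $k$ is not perfect. Here I would phrase the last step of (ii) with $F$-modules instead of $D$-modules. The map $H_I(R)\to H_I(S)$ should then be compared with Frobenius using a lift of Frobenius on $V[[x]]$, namely $x_i\mapsto x_i^p$.
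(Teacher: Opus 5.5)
\textbf{Verdict: there is a genuine gap, at the generation step you flagged, but a simple observation closes it.} Your Ext computations from Auslander--Buchsbaum match the paper. So does the observation that $N=\Image\bigl(H^{d-g}_I(R)\to H^{d-g}_I(S)\bigr)$ contains the image of $\Ext^{d-g}_R(R/I,S)$, and the $D_S$-stability of $N$ via lifted divided-power operators. The gap is the claim that the image of $\Ext^j_R(R/I,S)\to H^j_I(S)$ generates $H^j_I(S)$. You leave this open exactly in the case that matters, when $p$ is a zero-divisor on $R/I$. The pro-isomorphism route you sketch is not only unproved; it is not clear it can even be set up. A transition map $\Ext^j_R(R/I,S)\to F(\Ext^j_R(R/I,S))$ would come from a chain map $F(\overline F_\bullet)\to\overline F_\bullet$ lifting $S/I^{[p]}S\to S/IS$. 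Constructing such a map is obstructed by $H_1(\overline F_\bullet)=\operatorname{Tor}^R_1(R/I,S)=(0:_{R/I}p)$.

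\textbf{The missing idea.} You never need $\Ext^j_R(R/I,S)$ to be an $F$-module root; you only need its image to contain a generating set, and that is immediate. The augmentation $(R/I)\otimes^{\mathbb{L}}_R S\to S/IS$ induces natural maps
\[
\Ext^j_S(S/IS,S)\longrightarrow \Ext^j_R(R/I,S)\longrightarrow H^j_I(S).
\]
Their composite is Lyubeznik's map $\Ext^j_S(S/IS,S)\to H^j_{IS}(S)$, whose image generates $H^j_{IS}(S)$ as an $F$-module by \cite[Proposition 1.11]{LyubeznikFmodules}. For (i): when $j>d-g$ this composite factors through $\Ext^j_R(R/I,S)=0$, so the generating image is zero and hence $H^j_I(S)=0$. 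For (ii): at $j=d-g$ you get $N\supseteq \Image\bigl(\Ext^{d-g}_R(R/I,S)\bigr)\supseteq \Image\bigl(\Ext^{d-g}_S(S/IS,S)\bigr)$. The $p$-torsion of $R/I$ plays no role at all. This is exactly the paper's argument, and it replaces your ``hardest step'' entirely.

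\textbf{Two smaller points.}
\begin{enumerate}
\item In (ii) you know that $N$ is a $D_S$-submodule, but Lyubeznik's result gives only generation as an $F$-module. Passing from $F$-module generation to $D_S$-module generation is a real theorem, \cite[Corollary 4.4]{ABLGeneratorsDmodules}, valid when $S$ is $F$-finite. You should invoke it rather than treat the two notions as interchangeable.
\item For an imperfect residue field, the clean fix is a faithfully flat extension to $W(k')[[x_2,\dots,x_d]]$ with $k'$ perfect. This makes $S$ $F$-finite and $D_S$ generated by the $\partial^{[n]}_{x_i}$, which lift to $R$. A Frobenius lift $\phi$ does not do this job: it sends $H^j_I(R)$ to $H^j_{\phi(I)R}(R)$, and $\sqrt{\phi(I)R}\neq\sqrt{I}$ in general (e.g.\ $I=(x_2-p)$). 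So there is no evident reason for $N$ to be an $F$-submodule.
\end{enumerate}
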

\begin{proof}
By passing to a faithfully flat extension of the same dimension (which does not affect the hypotheses and the vanishing/surjectivity of local cohomology), we may assume that $(R,\m)$ is complete with perfect residue field, i.e., $R\cong W(k)[[x_2,\dots,x_d]]$ where $k$ is a perfect field. By the Auslander--Buchsbaum formula, we know that $\pd_R(R/I)=d-g$. Let 
$$0\to R^{n_{d-g}}\xrightarrow{\phi} R^{n_{d-g-1}}\to \cdots \to R^{n_1}\to R\to 0$$
be a minimal free resolution of $R/I$. 
Note that for any $R$-module $M$, we have
$$\Ext^{d-g}_R(R/I, M) \cong \text{coker}(M^{\oplus n_{d-g-1}}\xrightarrow{\phi^{\vee}}M^{\oplus n_{d-g}}).$$
In particular, we have 
$$\Ext^{d-g}_R(R/I, R/p)\cong \Ext^{d-g}_R(R/I, R)/p\cdot \Ext^{d-g}_R(R/I, R).$$ 
Therefore we have
\begin{enumerate}
\item[(a)] $\Ext^j_R(R/I,R/p)=0$ for all $j>d-g$.
\item[(b)] The natural map $\Ext^{d-g}_R(R/I, R) \to \Ext^{d-g}_R(R/I, R/p)$ is surjective.
\end{enumerate} 
We next note that for all $j$, there are natural maps
\begin{equation*}
\tag{$\dagger$}
\Ext_{R/p}^j(R/(I+p), R/p)\to \Ext_R^j(R/I, R/p)\to H_I^j(R/p) 
\end{equation*}
Here the first map is obtained by taking $j$-th cohomology of the canonical map
$$R\Hom_{R/p}(R/(I+p), R/p) \to R\Hom_{R/p}((R/I)\otimes^{\mathbb{L}}_RR/p, R/p) \cong R\Hom_R(R/I, R/p)$$
and it is easy to see that this is compatible with the map to local cohomology. 
It follows from ($\dagger$) and (a) that if $j>d-g$, then the natural map 
$$\Ext_{R/p}^j(R/(I+p), R/p)\xrightarrow{\gamma} H_I^j(R/p)$$
is zero. Since $R/p$ is a regular local ring of characteristic $p$ and $\Image(\gamma)$ generates $H_I^j(R/p)$ as an $F$-module by \cite[Proposition 1.11]{LyubeznikFmodules}, it follows that $H_I^j(R/p)=0$ for $j>d-g$. This completes the proof of (i).

To prove (ii), we consider the commutative diagram:
\[\xymatrix{
 & \Ext^{d-g}_{R/p}(R/(I+p), R/p) \ar[d] \ar@/^5pc/[dd]^{\beta}\\
\Ext^{d-g}_R(R/I, R) \ar[d] \ar@{->>}[r] & \Ext^{d-g}_R(R/I, R/p) \ar[d]^{\alpha} \\
H_I^{d-g}(R) \ar[r]^{\varphi} & H_I^{d-g}(R/p).
}
\]
where the first horizontal surjection follows from (b) and the existence of right vertical maps follows from ($\dagger$). Chasing this diagram we find that
$$\Image(\varphi) \supseteq \Image(\alpha) \supseteq \Image(\beta).$$
Finally, we note that by \cite[Equation (2.1) and Proof of Theorem 3.1]{BBLSZ1}, $\Image(\varphi)$ is a $D(R/p)$-submodule of $H_I^{d-g}(R/p)$. Since $R/p$ is a regular local ring of characteristic $p$, by \cite[Proposition 1.11]{LyubeznikFmodules}, $\Image(\beta)$ generates $H_I^{d-g}(R/p)$ as an $F$-module and hence also as a $D(R/p)$-module by \cite[Corollary 4.4]{ABLGeneratorsDmodules} (note that we have reduced to a situation that $R/p$ is $F$-finite). Putting these together, we know that $\Image(\varphi)=H_I^{d-g}(R/p)$, i.e., $\varphi$ is surjective. This completes the proof of (ii).
\end{proof}

The next lemma is well known (for example, see \cite[Lemma 2.7]{JiangTestElements}), and we include a short proof for completeness.

\begin{lemma}
\label{lem: height of maximal ideal}
Let $(R,\m)$ be an equidimensional and catenary Noetherian local ring of dimension $d$. Suppose $z\in \m$ is not in any minimal prime of $R$. Then every maximal ideal of $R[1/z]$ has height $d-1$. 
\end{lemma}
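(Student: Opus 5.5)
Maximal ideals of $R[1/z]$ correspond exactly to the primes $P\subseteq R$ with $z\notin P$ that are maximal with respect to this property. Given such a $P$, the plan is to show $\height P = d-1$ in two steps: bound it above by $d-1$, then show equality by combining catenarity, equidimensionality, and the maximality of $P$.

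For the upper bound, $P\neq \m$ because $z\in\m$. Hence $\dim R/P\geq 1$, and so $\height P\leq \dim R-\dim R/P\leq d-1$.

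For the lower bound, I first show that $\dim R/P=1$. Suppose instead that $\dim R/P\geq 2$. The domain $R/P$ is local Noetherian of dimension at least $2$, and $\bar z\neq 0$ in it. Every prime of $R/P$ minimal over $\bar z$ has height at most $1$ by Krull's principal ideal theorem. There are only finitely many such primes, and $\bar{\m}$ has height at least $2$. By prime avoidance, the primes $Q\supsetneq P$ with $\height(Q/P)=1$ cannot all contain $\bar z$. In more detail, a Noetherian local domain of dimension at least $2$ has infinitely many height-one primes, while only finitely many of them contain $\bar z$. So some prime $Q\supsetneq P$ avoids $z$, which contradicts the maximality of $P$. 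Therefore $\dim R/P=1$.

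Next I use catenarity together with equidimensionality. Choose a minimal prime $\mathfrak q\subseteq P$. Since $R$ is equidimensional, $\dim R/\mathfrak q=d$. Since $R$ is catenary, every saturated chain from $\mathfrak q$ to $\m$ has the same length, namely $d$. Concatenate a saturated chain from $\mathfrak q$ to $P$ of length $\height(P/\mathfrak q)$ with the saturated chain $P\subsetneq\m$ of length $1$. This gives $\height(P/\mathfrak q)+1=d$. Hence $\height P\geq d-1$, and combined with the upper bound, $\height P=d-1$.

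The only nontrivial point is the claim that a local Noetherian domain of dimension at least $2$ has infinitely many height-one primes. I would prove it as follows. Suppose there were only finitely many. By prime avoidance, $\bar\m$ is not contained in their union, so some element of $\bar\m$ lies in none of them. A minimal prime over that element then has height $1$ by Krull's principal ideal theorem, yet it is not among the listed primes. This is a contradiction.
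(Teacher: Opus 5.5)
Your proof is correct and rests on the same two facts as the paper's proof. First, maximality of $P$ rules out $\dim R/P\geq 2$, since otherwise there would be a strictly larger prime avoiding $z$. Second, catenarity plus equidimensionality give $\height P+\dim R/P=d$. The only difference is how the larger prime is produced: you take a height-one prime of the domain $R/P$ avoiding $\bar z$ via Krull's principal ideal theorem and prime avoidance. The paper instead extends $z$ to a system of parameters $z,x_{h+1},\dots,x_{d-1}$ of $R/Q$ and takes a minimal prime of $Q+(x_{h+1},\dots,x_{d-1})$. Incidentally, your version never uses the hypothesis that $z$ avoids the minimal primes of $R$.
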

\begin{proof}
Let $Q$ be a maximal ideal of $R[1/z]$. We will view $Q$ as a prime ideal of $R$ that does not contain $z$. If $Q$ has height $h$, then by prime avoidance we can choose $x_1,\dots,x_h,z$ that is part of a system of parameters of $R$ and $Q$ is a minimal prime of $(x_1,\dots,x_h)$. If $h\neq d-1$, then we can extend $x_1,\dots,x_h,z$ to a full system of parameters $x_1,\dots,x_h,z, x_{h+1},\dots,x_{d-1}$ so that $z,x_{h+1},\dots,x_{d-1}$ is a system of parameters on $R/Q$ (here we use the equidimensional and catenary assumptions to guarantee that $\dim(R/Q) = d-h$). But then any minimal prime of $Q+(x_{h+1},\dots,x_{d-1})$ is a prime ideal that contains $Q$ but does not contain $z$, contradicting the maximality of $Q$.
\end{proof}

We now state and prove our main result.

\begin{theorem}
\label{thm: depth and cohomological dimension}
Let $R$ be an unramified regular local ring of mixed characteristic $(0,p)$ and dimension $d$. Suppose $I\subseteq R$ is an ideal so that $\depth(R/I)=g$. Then $H_I^j(R)$ is a $\mathbb{Q}$-vector space for every $j> d-g$. Moreover,
\begin{enumerate}
    \item[(i)] If $g\geq 3$, then $\cd(I)\leq d-3$.
    \item[(ii)] If $g\geq 4$ and $R$ is essentially of finite type over a DVR, then $\cd(I) \leq d-4$.
\end{enumerate}
\end{theorem}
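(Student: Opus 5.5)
We first show that $H_I^j(R)$ is a $\mathbb{Q}$-vector space for $j>d-g$. Apply $H_I^\bullet(-)$ to the short exact sequence $0\to R\xrightarrow{p}R\to R/p\to 0$. This gives the exact sequence
\[
H_I^{j-1}(R)\xrightarrow{\varphi} H_I^{j-1}(R/p)\to H_I^j(R)\xrightarrow{p} H_I^j(R)\to H_I^j(R/p).
\]
Take $j>d-g$. If $j-1>d-g$, then Lemma~\ref{lem: key surjectivity}(i) gives $H_I^{j-1}(R/p)=0$. If $j-1=d-g$, then Lemma~\ref{lem: key surjectivity}(ii) says $\varphi$ is surjective. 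In either case multiplication by $p$ on $H_I^j(R)$ is injective. It is also surjective, since $H_I^j(R/p)=0$ by Lemma~\ref{lem: key surjectivity}(i). So $p$ acts bijectively. Every element of $R$ outside $\m$ is a unit, so every nonzero integer is invertible on $H_I^j(R)$, and it is a $\mathbb{Q}$-vector space. In particular $H_I^j(R)\cong H_I^j(R)[1/p]=H^j_{IR[1/p]}(R[1/p])$.

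It therefore suffices to show $H^j_{IR[1/p]}(R[1/p])=0$ for $j$ in the relevant range, namely $j\ge d-2$ for (i) and $j\ge d-3$ for (ii). Both $j$ and $d-g$ lie below these thresholds' complements only when $g\ge 3$, resp.\ $g\ge4$, so the $\mathbb{Q}$-vector space statement applies to all those $j$. This vanishing can be checked locally at the maximal ideals $Q$ of $R[1/p]$. The ring $R[1/p]$ is regular of characteristic zero. Since $R$ is a regular local domain and $p\in\m$ is nonzero, Lemma~\ref{lem: height of maximal ideal} shows each $S=R_Q$ has dimension $d-1$. Localization can only raise depth relative to dimension loss: $\depth (R/I)_Q\ge \depth(R/I)-\dim(R/Q)$, and $\dim R/Q=1$. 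Hence $\depth(S/IS)\ge g-1$.

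For (i), $g\ge3$ gives $\depth(S/IS)\ge 2$. By Hartshorne's connectedness theorem together with the second vanishing theorem in equal characteristic zero (Ogus), passing to the completion of the strict henselization, we get $\cd(IS)\le (d-1)-2=d-3$. So $H_I^j(R)_Q=0$ for $j\ge d-2$, and (i) follows.

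For (ii), $g\ge 4$ gives $\depth(S/IS)\ge3$, and we want Dao--Takagi. The main obstacle is verifying its hypothesis that $S$ is essentially of finite type over a field of characteristic zero. $R$ is a localization of a finite type algebra over a DVR $V$ of mixed characteristic, with $p$ in the maximal ideal of $V$. Then $V[1/p]=K$ is the fraction field, so $R[1/p]$ is a localization of a finite type $K$-algebra and $R_Q$ is essentially of finite type over $K$. One must check that $p$ is a uniformizer of $V$, or at least that $p$ lies in its maximal ideal, which holds since $V\to R$ is local. Dao--Takagi then gives $\cd(IS)\le (d-1)-3=d-4$, so $H_I^j(R)_Q=0$ for all $Q$ and all $j\ge d-3$. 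Since $g\ge4$ means $d-3>d-g$, these $j$ are in the range where the first part applies, and we conclude $\cd(I)\le d-4$.
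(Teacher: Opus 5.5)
Your proof is correct and follows the paper's argument. The long exact sequence for multiplication by $p$, together with Lemma~\ref{lem: key surjectivity}, makes $p$ bijective on $H_I^j(R)$ for $j>d-g$; you then localize $R[1/p]$ at maximal ideals of dimension $d-1$ (Lemma~\ref{lem: height of maximal ideal}) and apply the second vanishing theorem, resp.\ Dao--Takagi. The one variation is that you obtain $\depth(R_Q/IR_Q)\ge g-1$ from the standard inequality $\depth M\le \depth M_Q+\dim R/Q$ with $\dim R/Q=1$, whereas the paper uses Auslander--Buchsbaum with $\pd_{R_Q}(R_Q/IR_Q)\le \pd_R(R/I)=d-g$; both work, and your explanation of why $R_Q$ is essentially of finite type over $\operatorname{Frac}(V)$ spells out a detail the paper leaves implicit.
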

\begin{proof}
Consider the long exact sequence of local cohomology:
$$\cdots \to H_I^{j-1}(R)\to H_I^{j-1}(R/p) \to H_I^j(R) \xrightarrow{\cdot p} H_I^j(R) \to H_I^j(R/p) \to \cdots .$$
By Lemma~\ref{lem: key surjectivity} we know that multiplication by $p$ is bijective for all $j>d-g$. This proves the first statement.

We next prove the ``Moreover" part of the theorem. Let $Q$ be a maximal ideal of $R[1/p]$. By Lemma~\ref{lem: height of maximal ideal}, we have $\dim(R_Q)=\depth(R_Q)=d-1$. By the Auslander--Buchsbaum formula, we know that $\pd_R(R/I)=d-g$ and thus $\pd_{R_Q}(R_Q/IR_Q)\leq d-g$. It follows that $$\depth(R_Q/IR_Q)\geq (d-1)-(d-g) =g-1.$$

Now in case (i), by the first statement already established, it suffices to show that $H_I^j(R[1/p])=0$ for $j\geq d-2$. If $Q$ is any maximal ideal of $R[1/p]$, then as $\dim(R_Q)=d-1$ and $\depth(R_Q/IR_Q)\geq g-1\geq 2$, we know that 
$H_I^{d-1}(R_Q)=H_I^{d-2}(R_Q)=0$
by the Hartshorne--Lichtenbaum vanishing, Hartshorne's connectedness theorem and the second vanishing theorem. 

Finally in case (ii), as above, it suffices to show that $H_I^j(R[1/p])=0$ for $j\geq d-3$. Now if $Q$ is any maximal ideal of $R[1/p]$, then $R_Q$ is a regular local ring of dimension $d-1$ essentially of finite type over a field of characteristic zero and $\depth(R_Q/IR_Q)\geq g-1\geq 3$. Thus $H_I^{d-3}(R_Q)=0$ by \cite[Corollary 2.8]{DaoTakagiDepthCohomologicalDimension}, while $H_I^{d-1}(R_Q)=H_I^{d-2}(R_Q)=0$ follows from the Hartshorne--Lichtenbaum vanishing,  Hartshorne's connectedness theorem and the second vanishing theorem. 
\end{proof}

\begin{remark}
\label{rmk: sharpness}
With notations as in Theorem~\ref{thm: depth and cohomological dimension}. It is not true that $g\geq 5$ implies $\cd(I)\leq d-5$. For example, let $R=\mathbb{Z}_p[x_1, x_2, x_3, y_1, y_2, y_3]_{\m}$ where $\m$ denotes the maximal ideal $(p,x_i, y_i | 1\leq i\leq 3)$ and let $I=(x_iy_j - x_jy_i \mid  1\leq i < j \leq 3)$. Then we have $d=\dim(R)=7$ and $g=\depth(R/I) =5$, but 
$H_I^3(R) \neq 0$. In fact, by \cite[Theorem 1.2]{LyubeznikSinghWalther} (together with a harmless completion), we have 
$$H_I^3(R) \cong  H_I^3(R)[1/p] \cong H_{(x_i, y_i | 1\leq i\leq 3)}^6(\mathbb{Q}_p[[x_1, x_2, x_3, y_1, y_2, y_3]]).$$
\end{remark}

\begin{remark}
With notations as in Theorem~\ref{thm: depth and cohomological dimension}. If $R/I$ is Cohen--Macaulay and $p$ is a nonzerodivisor on $R/I$, then multiplication by $p$ is injective on $H_I^j(R)$ for every $j$. Indeed, as $R/(I+p)$ is Cohen--Macaulay, $H_I^{d-g}(R/p)$ is the only nonvanishing local cohomology of $R/p$ supported at $I$ by \cite{PeskineSzpiroIHES} and the claim follows by examining the long exact sequence and using Lemma~\ref{lem: key surjectivity} as in Theorem~\ref{thm: depth and cohomological dimension}. This recovers and generalizes some aspects of \cite[Theorem 2.1]{JPSWArithmeticRank}.   
\end{remark}

\begin{remark}
When $(R,\m)$ is a regular local ring of characteristic $p>0$, Hartshorne--Speiser \cite{HartshorneSpeiserLocalCohomologicalDimension} and Lyubeznik \cite{LyubeznikVanishingLocalCohomology} obtained formulas for the local cohomological dimension. For example, by \cite[Theorem 4.3]{LyubeznikVanishingLocalCohomology}, we have 
$$\cd(I) = \dim(R)- \depth(R/I)_{\text{perf}}.$$ Here $\depth(R/I)_{\text{perf}} = \min\{j \mid H_{\m}^j((R/I)_{\text{perf}}) \neq 0\}$ where $(R/I)_{\text{perf}} = \varinjlim_e F^e_*(R/I)$. This formula implies Peskine--Szpiro's inequality $\cd(I)\leq \dim(R)-\depth(R/I)$ because $\depth(R/I)_{\text{perf}}\geq \depth(R/I)$. It was shown in \cite[Theorem 6.6]{ZhangSecondVanishing} that if $R$ is an unramified regular local ring of mixed characteristic $(0,p)$ and $p\in I$, then 
$$\cd(I) \leq \dim(R)- \depth(R/I)_{\text{perf}}.$$
We point out that in general, this inequality can be strict. For example, let $(R,\m)=\mathbb{Z}_2[[x_1,\dots,x_6]]$ and let $J$ be the square-free monomial ideal corresponding to the standard triangulation of $\mathbb{R}\mathbb{P}^2$, see \cite[Example 5.2]{SinghWaltherBockstein}, and let $I=J+(2)$. Then as $R/I$ is $F$-pure, we have 
$$\depth(R/I)_{\text{perf}}=\depth(R/I)=2$$
where the second equality follows from Hochster's formula and a computation on singular cohomology (see \cite[Example 5.10]{SinghWaltherBockstein}). On the other hand, by \cite[Remark 4.3 and Proposition 4.5]{DattaSwitalaZhang} we know that $H_J^{>4}(R)=0$ and $H_J^4(R)\neq 0$ and is annihilated by $(2)$. It follows from the long exact sequence $\cdots\to H_I^i(R) \to H_J^i(R)\to H_J^i(R[1/2])\to\cdots$ that $H_I^{>4}(R)=0$ and $H_I^4(R)\neq 0$. Thus
$$\cd(I)=4 < 7-2 = \dim(R)-\depth(R/I)_{\text{perf}}.$$
This example also answers \cite[Question 6.2]{ZhangSecondVanishing} in the negative: $H_I^5(R)=0$ but the Frobenius is not nilpotent on $H_{\m}^2(R/I)$.
\end{remark}

Our result leaves open a natural question: if $R$ is a {ramified} regular local ring of mixed characteristic and $I\subseteq R$ is an ideal so that $\depth(R/I)\geq 3$, then is it true that $\cd(I)\leq \dim(R)-3$? Our current techniques do not seem able to tackle this case. 

\bibliographystyle{alpha}
\bibliography{Bib}
\end{document}